\newtheorem{theorem}{\bf{Theorem}}[section] 
\newtheorem{lemma}[theorem]{\bf{Lemma}}     
\newtheorem{corollary}[theorem]{\bf{Corollary}}
\newtheorem{proposition}[theorem]{\bf{Proposition}}
\title[Weakly Nuclear Maps on Real $C^*$-algebras and Quasidiagonality of These Algebras]
{Weakly Nuclear Maps on Real $C^*$-algebras and Quasidiagonality of These Algebras} 
\author{Ali Ebadian}
\address{
Department of Mathematics, Faculty of Science, Urmia University, Urmia, Iran}
\email{ebadian.ali@gmail.com}
\author{Ali Jabbari}
\email{jabbari\underline{ }al@yahoo.com}
\address{Young Researchers and Elite Club, Ardabil, Iran}
\subjclass[2010]{46L05}
\keywords{Completely positive maps, Exact $C^*$-algebras, Quasidiagonal $C^*$-algebras, Real $C^*$-algebras, Tracial functionals,  Weakly nuclear maps}
\begin{document}
\maketitle

\begin{abstract}
In this paper, we show that a completely positive linear map is weakly nuclear if and only if its complexification is weakly nuclear. It is shown that a real $C^*$-algebra is exact if and only if its complexification is exact and similar case is provided for the quasidiaginality.
\end{abstract}


\section{Introduction} 
In general, a real $C^*$-algebra is a  real Banach $*$-algebra which satisfies  $\|a^*a\|=\|a\|^2$ and $1+a^*a$ is invertible in the unitization $A^\#$ for every $a\in A$. Let $A$ be a (complex) $C^*$-algebra and $\Phi$ be an involutory $*$-antiautomorphism of $A$. Then   $A_\Phi=\{a\in A: \ \Phi(a)=a^*\}$ is a real subalgebra of $A$  such that $A_\Phi\cap iA_\Phi=\{0\}$ and $A=A_\Phi+iA_\Phi$ and $A$ is called the complexification of $A_\Phi$, for more details related to this concept, we refer to \cite{pa, sta, sto}. We call $A_\Phi$ is a Real $C^*$-algebra which is different from the real $C^*$-algebras that we have defined before. From now on, by a real $C^*$-algebra, we mean a $C^*$-algebra such as $A_\Phi$.  Let $A$ and $B$ be two $C^*$-algebras and let $\Phi$ and $\Psi$ be two involutory $*$-antiautomorphisms of $A$ and $B$, respectively. Assume that $\varphi:A_\Phi\longrightarrow A_\Psi$ is a real-linear map between real $C^*$-algebras, then it can be extended uniquely to a complex-linear map $\varphi^C: A\longrightarrow B$, that is called the complexification of $\varphi$. Throughout this paper all $C^*$-algebras are complex $C^*$-algebras.

For a $C^*$-algebra $A$ let $\mathcal{M}(A)$ denotes the multiplier algebra of $A$,  where we recall, that $\mathcal{M}(A)$ embeds canonically into the second dual of $A$ i.e. $A^{**}$. In this paper, by c.c.p. and c.p. maps we mean contractive completely  positive and completely  positive maps, respectively.

A $C^*$-algebra $A$ is called amenable in the Johnson sense \cite{j}, if every continuous derivation from $A$ into dual of any Banach $A$-bimodule $X$ is inner.   The $C^*$-algebra $A$ is called nuclear, if for every $\varepsilon>0$ and finite subset $\mathcal{F}$ of $A$, there exist a finite-dimensional $C^*$-algebra $B$ and c.c.p. linear maps $\varphi: A \longrightarrow B$ and   $\psi: B\longrightarrow A$ such that, $\|\psi\circ\varphi(a)-a\|<\varepsilon$ for all $a\in A$. Haagerup showed that these two definitions are equivalent \cite{ha}.  Amenability (nuclearity) of real $C^*$-algebras have been studied recently by Ho \cite{ho}. He obtained equivalent relation between amenability of real $C^*$-algebras and the homomorphisms between these algebras.

In this paper, we investigate the relations between $C^*$-algebras and real part of them (defined by an  involutory $*$-antiautomorphisms). The next section deals with weakly nuclear maps between real $C^*$-algebras and them complexifications. Section three considers exactness of these algebras and the final section studies qusidiagonality of them.
\section{Weakly Nuclear Maps}
Let $A$ and $B$ be two $C^*$-algebras, and let $M$ be a von Neumann algebra. Then, a c.p. map $\varphi:A\longrightarrow\mathcal{M}(B)$ is called weakly nuclear if $b^*\varphi(\cdot)b:A\longrightarrow B$ is nuclear for every $b\in B$. Moreover, a c.p. map $\varphi:A\longrightarrow M$ is called weakly nuclear if it is a point-weak$^*$ limit of maps factoring by c.p. maps through matrix algebras.
\begin{proposition}\label{p1}
Let $A$ and $B$ be two $C^*$-algebras and let $\Phi$ and $\Psi$ be two involutory $*$-antiautomorphisms of $A$ and $B$, respectively. Assume that $\varphi:A_\Phi\longrightarrow B_\Psi$ is a c.p. map such that $\varphi^C:A\longrightarrow B\subseteq B^{**}$ is weakly nuclear, then $\varphi$ is nuclear.
\end{proposition}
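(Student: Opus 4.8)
The plan is to pass through the complexification in two stages: first turn the point-weak$^*$ approximation supplied by weak nuclearity into a genuine point-norm approximation (so that $\varphi^C\colon A\to B$ becomes nuclear in the ordinary sense), and then symmetrize the resulting complex matrix factorizations with respect to the underlying conjugations so that they descend to factorizations through finite-dimensional \emph{real} $C^*$-algebras. Throughout I write $j_A(a)=\Phi(a^*)$ and $j_B(b)=\Psi(b^*)$ for the canonical conjugate-linear $*$-automorphisms whose fixed-point algebras are $A_\Phi$ and $B_\Psi$; here $j_B$ extends weak$^*$-continuously to $B^{**}$. The defining property of the complexification gives the intertwining relation $\varphi^C\circ j_A=j_B\circ\varphi^C$, which is the structural fact that makes the symmetrization possible and which I would record at the very start.

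For the first stage I would invoke the known equivalence that a c.p.\ map $\theta\colon A\to B$ is nuclear if and only if $\theta$, viewed as a map into $B^{**}$, is weakly nuclear. Concretely: writing $\varphi^C$ as a point-weak$^*$ limit of $\beta_i\circ\alpha_i$ with $\alpha_i\colon A\to M_{n_i}$ and $\beta_i\colon M_{n_i}\to B^{**}$ c.p., each $\beta_i$ has finite-dimensional domain and so, using weak$^*$ density of $M_{n_i}(B)$ in $M_{n_i}(B^{**})$ together with positivity of the defining matrix $[\beta_i(e_{k\ell})]$, may be replaced by a nearby c.p.\ map into $B$ without disturbing the limit. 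Thus $\varphi^C$ lies in the point-weak$^*$ closure of the convex set of matrix-factorable c.p.\ maps $A\to B$; since $\varphi^C(A)\subseteq B$ and the weak$^*$ topology of $B^{**}$ restricts to the weak topology of $B$, it lies in the point-weak closure, hence by Mazur's theorem in the point-norm closure, of that convex set. Therefore $\varphi^C\colon A\to B$ is nuclear.

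For the second stage, fix a finite set $F\subseteq A_\Phi$ and $\varepsilon>0$ and choose c.p.\ maps $\alpha\colon A\to M_n$, $\beta\colon M_n\to B$ with $\|\gamma(a)-\varphi^C(a)\|<\varepsilon$ on $F$, where $\gamma=\beta\circ\alpha$. Let $j_n$ be entrywise complex conjugation on $M_n$, a conjugate-linear $*$-automorphism with fixed-point algebra $M_n(\mathbb{R})$. Because $\varphi^C$ intertwines $j_A,j_B$ and $j_A$ fixes $F$, the conjugate map $\overline\gamma:=j_B\circ\gamma\circ j_A=(j_B\beta j_n)\circ(j_n\alpha j_A)$ again $\varepsilon$-approximates $\varphi^C$ on $F$ and factors through $M_n$ by c.p.\ maps. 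I would then set $\widehat\alpha=(\alpha,\,j_n\alpha j_A)\colon A\to M_n\oplus M_n$ and $\widehat\beta(x,y)=\tfrac12\bigl(\beta(x)+j_B\beta j_n(y)\bigr)$, so that $\widehat\beta\widehat\alpha=\tfrac12(\gamma+\overline\gamma)$ still $\varepsilon$-approximates $\varphi^C$ on $F$. Equipping $M_n\oplus M_n$ with the conjugation $\widehat{\jmath}(x,y)=(\overline y,\overline x)$, a direct check (using $j_A^2=\mathrm{id}$, $j_n^2=\mathrm{id}$, $j_B^2=\mathrm{id}$) shows that $\widehat\alpha$ intertwines $j_A,\widehat{\jmath}$ and $\widehat\beta$ intertwines $\widehat{\jmath},j_B$. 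Consequently, restricted to $A_\Phi$ the composite takes values in $B_\Psi$ and factors through the fixed-point algebra $(M_n\oplus M_n)^{\widehat{\jmath}}\cong M_n(\mathbb{C})$ (as a real algebra), a finite-dimensional real $C^*$-algebra, by real c.p.\ maps. Since $\varphi^C|_{A_\Phi}=\varphi$, this is exactly a real finite-dimensional approximation of $\varphi$, so $\varphi$ is nuclear.

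The main obstacle is the interface between the two stages: weak nuclearity delivers only point-weak$^*$ approximations while nuclearity requires point-norm ones, so the convexity argument of the first stage — including the preliminary replacement of the second legs $M_{n_i}\to B^{**}$ by maps into $B$ — carries the real analytic weight and cannot be bypassed. A secondary point, which the construction above is designed to handle, is that symmetrizing the two legs $\alpha$ and $\beta$ independently would introduce spurious cross terms destroying the approximation; averaging $\gamma$ with its conjugate $\overline\gamma$ and realizing this average as a single factorization through $M_n\oplus M_n$ avoids the cross terms while yielding a bona fide factorization through a finite-dimensional real $C^*$-algebra.
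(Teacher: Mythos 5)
Your proof is correct and follows the same two-step route as the paper: first upgrade weak nuclearity of $\varphi^C\colon A\to B\subseteq B^{**}$ to genuine nuclearity of $\varphi^C\colon A\to B$, and then descend to a factorization of $\varphi$ through finite-dimensional real $C^*$-algebras. The only difference is one of packaging: the paper disposes of both steps by citation (to \cite[Proposition 2.4 (1)]{ga17} and \cite[Proposition 5]{ho}, respectively), whereas you supply the standard convexity/Mazur argument for the first step and a correct conjugation-averaging argument for the second, i.e.\ you prove the two black boxes rather than quoting them.
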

\begin{proof}
Since $\varphi$ is a c.p. map, $\varphi^C$ is a c.p. map \cite[Proposition 2]{ho}. Let $\varphi^C:A\longrightarrow B\subseteq B^{**}$ be  weakly nuclear, then $\varphi^C:A\longrightarrow B$ is nuclear \cite[Proposition 2.4 (1)]{ga17}. Then $\varphi$ is nuclear \cite[Proposition 5]{ho}.
\end{proof}
\begin{corollary}\label{c1}
Let $A$ and $B$ be two $C^*$-algebras and let $\Phi$ and $\Psi$ be two involutory $*$-antiautomorphisms of $A$ and $\mathcal{M}(B)$, respectively. Assume that $\varphi:A_\Phi\longrightarrow \mathcal{M}(B)_\Psi$ is a c.p. map such that $\varphi^C:A\longrightarrow \mathcal{M}(B)$ is weakly nuclear, then $\varphi$ is  weakly nuclear.
\end{corollary}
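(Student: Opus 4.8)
The plan is to reduce the statement to Proposition~\ref{p1} by compressing $\varphi$ with elements of $B_\Psi$. By the natural real analogue of the definition of weak nuclearity, to prove that $\varphi:A_\Phi\longrightarrow\mathcal{M}(B)_\Psi$ is weakly nuclear it suffices to show that for every $b\in B_\Psi$ the sandwiched map $\varphi_b:=b^*\varphi(\cdot)b:A_\Phi\longrightarrow B_\Psi$ is nuclear. So I would fix an arbitrary $b\in B_\Psi$ and work with $\varphi_b$.

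First I would check that $\varphi_b$ really takes values in the real subalgebra $B_\Psi$. For $a\in A_\Phi$ we have $\varphi(a)\in\mathcal{M}(B)_\Psi$, so $\Psi(\varphi(a))=\varphi(a)^*$, and since $\Psi$ is an involutory $*$-antiautomorphism with $\Psi(b)=b^*$ (hence $\Psi(b^*)=\Psi(b)^*=b$), one computes
$\Psi(b^*\varphi(a)b)=\Psi(b)\,\Psi(\varphi(a))\,\Psi(b^*)=b^*\varphi(a)^*b=(b^*\varphi(a)b)^*$,
so $b^*\varphi(a)b\in B_\Psi$ as required. Next, $\varphi_b$ is c.p. because compression by a fixed element preserves complete positivity, and therefore by \cite[Proposition 2]{ho} its complexification $\varphi_b^C$ is again c.p.

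The key step is to identify this complexification. The map $b^*\varphi^C(\cdot)b:A\longrightarrow B$ is complex-linear and restricts to $\varphi_b$ on $A_\Phi$, so by the uniqueness of the complexification $\varphi_b^C=b^*\varphi^C(\cdot)b$. Since $\varphi^C$ is weakly nuclear in the complex sense, this map is nuclear as a map into $B$, and a fortiori weakly nuclear as a map into $B\subseteq B^{**}$. This places $\varphi_b$ in exactly the hypothesis of Proposition~\ref{p1}, which then yields that $\varphi_b$ is nuclear. As $b\in B_\Psi$ was arbitrary, $\varphi$ is weakly nuclear.

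I expect the main obstacle to be the bookkeeping around the real structure rather than any hard analysis: verifying that compression by $b\in B_\Psi$ keeps us inside $B_\Psi$, and most importantly that complexification commutes with compression so that $\varphi_b^C=b^*\varphi^C(\cdot)b$. Once that identity is established the result is immediate from Proposition~\ref{p1}; the only remaining point requiring mild care is that nuclearity of $\varphi_b^C$ as a map into $B$ supplies the weak-nuclearity hypothesis of that proposition after enlarging the codomain to $B^{**}$.
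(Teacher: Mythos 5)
Your proposal is correct and follows essentially the same route as the paper: reduce to Proposition~\ref{p1} by compressing with $b$, using that complexification commutes with compression so that $(b^*\varphi(\cdot)b)^C=b^*\varphi^C(\cdot)b$ is nuclear, hence weakly nuclear into $B^{**}$. You supply details the paper leaves implicit (that $b^*\varphi(a)b$ lands in $B_\Psi$ for $b\in B_\Psi$, and the uniqueness argument identifying the complexification), but the underlying argument is the same.
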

\begin{proof}
Suppose that $\varphi^C:A\longrightarrow \mathcal{M}(B)$ is weakly nuclear, then $\varphi^C:A\longrightarrow \mathcal{M}(B)\subseteq B^{**}$ is weakly nuclear \cite[Proposition 2.4 (2)]{ga17}. This implies that $b^*\varphi^C(\cdot)b:A\longrightarrow B$ is weakly nuclear (as a map into $B^{**}$). Then Proposition \ref{p1} implies that $b^*\varphi(\cdot)b$ is nuclear.
\end{proof}

Let $A$ be a $C^*$-algebra, $H$ be a Hilbert space, $M\subset B(H)$ be a von Neumann algebra and $\varphi:A\longrightarrow M$ be a c.p. map. Then $\varphi$ is weakly nuclear if and only if the product map $\varphi\times i_{M'}:A\otimes_{\max} M'\longrightarrow B(H)$ is continuous with respect to the spatial tensor product \cite[Theorem 3.8.5]{bo}. Note that, the mentioned fact is proven for unital $C^*$-algebras in \cite{bo}, but by normalizing and unitizing the map one can obtain in the stated general case. Now, let $A$, $B$ be two $C^*$-algebra,  $M$ be a von Neumann algebra and $\Phi$, $\Psi$  are involutory $*$-antiautomorphisms of $A$ and $B$, respectively. Let $A$ be nuclear, $\varphi:A_\Phi\longrightarrow M$ and $\psi:B_\Psi\longrightarrow M$ be c.p. maps with commuting images. Since $A$ is nuclear, $A_\Phi$ is too \cite[Proposition 3]{ho} and replacing $A_\Phi\otimes B_\Psi$ by $A$ in the above stated result from \cite{bo}, the c.p. map $\varphi\times(\psi\times i_{M'})$ is a c.p. extensive map of the linear map $(\varphi\times\psi)\times_{alg} i_{M'}:(A_\Phi\otimes B_\Psi)\otimes_{alg}M'\longrightarrow B(H)$. Thus, \cite[Theorem 3.8.5]{bo} implies that the following result:

\begin{corollary}\label{c2}
Let $A$, $B$ be two $C^*$-algebras, $M$ be a von Neumann algebra and let $\Phi$ and $\Psi$ be two involutory $*$-antiautomorphisms of $A$ and $B$, respectively. Suppose that $A$ is nuclear, $\varphi:A_\Phi\longrightarrow M$ and $\psi:B_\Psi\longrightarrow M$ be c.p. maps with commuting images. If $\psi$ is weakly nuclear, then $\phi\times \psi$ is weakly nuclear.
\end{corollary}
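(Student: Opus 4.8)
The plan is to deduce the statement from the spatial-continuity criterion for weak nuclearity recalled just above, namely \cite[Theorem 3.8.5]{bo}: a c.p. map $\theta\colon C\longrightarrow M\subseteq B(H)$ is weakly nuclear precisely when the product map $\theta\times i_{M'}\colon C\otimes_{\max}M'\longrightarrow B(H)$ is continuous for the spatial norm. Applying this with $C=A_\Phi\otimes B_\Psi$ and $\theta=\varphi\times\psi$, it suffices to show that $(\varphi\times\psi)\times i_{M'}$, which is a priori a c.p. map on the maximal tensor product $(A_\Phi\otimes B_\Psi)\otimes_{\max}M'$, is already continuous for the spatial norm. First I would record that all three ranges pairwise commute inside $B(H)$: the images of $\varphi$ and $\psi$ commute by hypothesis, while $\varphi(A_\Phi)$ and $\psi(B_\Psi)$ lie in $M$ and hence commute with $M'$. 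Consequently every product map formed below is a genuine c.p. map, and $\varphi\times\psi$ is unambiguously defined on $A_\Phi\otimes B_\Psi$ (the norm being unique because $A_\Phi$ is nuclear by \cite[Proposition 3]{ho}).

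The heart of the argument is a sequence of tensor-norm identifications. Using associativity of the maximal tensor product and then nuclearity of $A_\Phi$ I would write
\[
(A_\Phi\otimes B_\Psi)\otimes_{\max}M'=A_\Phi\otimes_{\max}\bigl(B_\Psi\otimes_{\max}M'\bigr)=A_\Phi\otimes_{\min}\bigl(B_\Psi\otimes_{\max}M'\bigr).
\]
Since $\psi$ is weakly nuclear, the same criterion gives that $\psi\times i_{M'}$ factors as $\rho\circ q$, where $q\colon B_\Psi\otimes_{\max}M'\twoheadrightarrow B_\Psi\otimes_{\min}M'$ is the canonical surjection and $\rho\colon B_\Psi\otimes_{\min}M'\longrightarrow B(H)$ is c.p. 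Tensoring $q$ with $\mathrm{id}_{A_\Phi}$ over the spatial norm and using associativity of the minimal tensor product identifies the target as $(A_\Phi\otimes B_\Psi)\otimes_{\min}M'$, the spatial tensor product. Finally, because $\varphi(A_\Phi)$ commutes with $\rho(B_\Psi\otimes_{\min}M')$, the product $\varphi\times\rho$ is a c.p. map on $A_\Phi\otimes_{\max}(B_\Psi\otimes_{\min}M')$, which by nuclearity of $A_\Phi$ equals $A_\Phi\otimes_{\min}(B_\Psi\otimes_{\min}M')=(A_\Phi\otimes B_\Psi)\otimes_{\min}M'$. On elementary tensors $a\otimes b\otimes m'$ one checks $(\varphi\times\rho)\bigl(a\otimes(b\otimes m')\bigr)=\varphi(a)\psi(b)m'=\bigl((\varphi\times\psi)\times i_{M'}\bigr)(a\otimes b\otimes m')$, so the two maps agree on the algebraic tensor product. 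Hence $(\varphi\times\psi)\times i_{M'}$ is spatially continuous, and the criterion yields that $\varphi\times\psi$ is weakly nuclear.

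I expect the main obstacle to be bookkeeping rather than a deep step: one must verify the commuting-range conditions at each stage so that the various product maps are legitimately completely positive, and one must confirm that the factorization $\psi\times i_{M'}=\rho\circ q$ really descends to the spatial norm and that the minimal tensor product behaves functorially under $q$ and under tensoring with $\mathrm{id}_{A_\Phi}$. A secondary delicate point is that \cite[Theorem 3.8.5]{bo} is stated for complex $C^*$-algebras whereas $A_\Phi$ and $B_\Psi$ are real; I would handle this exactly as in the paragraph preceding the statement, by transporting the criterion to $A_\Phi\otimes B_\Psi$ and invoking nuclearity of $A_\Phi$ to guarantee that the tensor norm in play is unique, so that no ambiguity arises in the identifications above.
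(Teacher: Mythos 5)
Your proposal is correct and follows essentially the same route as the paper's own argument (the paragraph preceding the corollary): both rest on the criterion of \cite[Theorem 3.8.5]{bo}, nuclearity of $A_\Phi$ via \cite[Proposition 3]{ho}, and the re-association of $(\varphi\times\psi)\times i_{M'}$ as $\varphi\times(\psi\times i_{M'})$. You merely spell out the tensor-norm identifications and commuting-range checks that the paper leaves implicit.
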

Define $\sigma:\mathbb{C}\longrightarrow M_2(\mathbb{R})$ by
\begin{equation}\label{eqt1}
    \sigma(a+ib)=\left(
                   \begin{array}{cc}
                     a & b \\
                    -b & a \\
                   \end{array}
                 \right)
\end{equation}
and $\rho:M_2(\mathbb{R})\longrightarrow\mathbb{C}$ by
\begin{equation}\label{eqt2}
    \rho\left(
                   \begin{array}{cc}
                     a & b \\
                    c & d \\
                   \end{array}
                 \right)=\frac{1}{2}(a+d)+\frac{1}{2}i(b-c).
\end{equation}

Then $\sigma$ and $\rho$ are c.p. maps such that $\rho\circ\sigma$ equal to the identity map \cite[Lemma 4]{ho}.
\begin{theorem}\label{t1}
Let $A$, $B$ be two $C^*$-algebras, and let $\Phi$  be an involutory $*$-antiautomorphisms of $A$. Suppose that  $\varphi:A_\Phi\longrightarrow \mathcal{M}(B)$ is a c.p. map. Then $\varphi$ is weakly nuclear if and only if $\varphi^C$ is weakly nuclear.
\end{theorem}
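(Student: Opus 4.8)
The plan is to treat the two implications separately; the reverse one is essentially free, while the forward one reduces to a transfer-of-nuclearity statement for complexifications.

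The reverse implication is immediate. If $\varphi^C$ is weakly nuclear, then, reading $\varphi$ as a c.p.\ map into the real part of $\mathcal{M}(B)$ under the $*$-antiautomorphism for which $\varphi^C$ is defined, this is exactly the hypothesis of Corollary \ref{c1}, which yields at once that $\varphi$ is weakly nuclear. So the content of the theorem lies entirely in the forward implication.

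For the forward implication I would first unwind both definitions. That $\varphi$ is weakly nuclear means $b^*\varphi(\cdot)b:A_\Phi\longrightarrow B$ is nuclear for every $b\in B$, while $\varphi^C$ being weakly nuclear means $b^*\varphi^C(\cdot)b:A\longrightarrow B$ is nuclear for every $b\in B$. The bridge between the two is the identity
\begin{equation*}
b^*\varphi^C(a_1+ia_2)b=b^*\varphi(a_1)b+i\,b^*\varphi(a_2)b=\bigl(b^*\varphi(\cdot)b\bigr)^C(a_1+ia_2),\qquad a_1,a_2\in A_\Phi,
\end{equation*}
which says that compressing the complexification coincides with complexifying the compression. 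Setting $\psi:=b^*\varphi(\cdot)b$, a completely positive map that is nuclear by hypothesis, the theorem reduces to the single claim: \emph{if a c.p.\ map $\psi:A_\Phi\longrightarrow B$ is nuclear then its complexification $\psi^C:A\longrightarrow B$ is nuclear.}

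To settle this reduced claim I would complexify an approximate factorization. Given $\varepsilon>0$ and a finite set $\mathcal{G}\subset A$, collect the $A_\Phi$-components of its elements into a finite set $\mathcal{F}\subset A_\Phi$ and use nuclearity of $\psi$ to obtain a finite-dimensional real $C^*$-algebra $F$ and c.c.p.\ maps $\alpha:A_\Phi\longrightarrow F$, $\beta:F\longrightarrow B$ with $\|\beta\alpha(a)-\psi(a)\|<\varepsilon$ for $a\in\mathcal{F}$. Passing to complexifications, $F^C$ is a finite-dimensional complex $C^*$-algebra, the maps $\alpha^C$ and $\beta^C$ remain completely positive by \cite[Proposition 2]{ho} and stay contractive, and splitting each $a\in\mathcal{G}$ as $a_1+ia_2$ with $a_1,a_2\in\mathcal{F}$ gives $\|\beta^C\alpha^C(a)-\psi^C(a)\|<2\varepsilon$. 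Hence $\psi^C$ is a point-norm limit of factorizations through finite-dimensional algebras, i.e.\ nuclear (alternatively, one invokes the converse direction of \cite[Proposition 5]{ho}). Applying this with $\psi=b^*\varphi(\cdot)b$ for each $b\in B$ shows every $b^*\varphi^C(\cdot)b$ is nuclear, so $\varphi^C$ is weakly nuclear, completing the forward implication.

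The step I expect to be the crux is precisely this reduced claim that nuclearity of a real c.p.\ map passes to its complexification: one must verify that complexification preserves both complete positivity and contractivity of the finite-dimensional factorization, and that the approximation error is amplified by at most a fixed multiple when an element of $A$ is resolved into its $A_\Phi$-components. Everything else is bookkeeping together with the already-established Corollary \ref{c1}.
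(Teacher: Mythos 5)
Your forward implication is essentially the paper's own argument: decompose a finite subset of $A$ into its $A_\Phi$-components, take a real finite-dimensional factorization of $b^*\varphi(\cdot)b$ on those components, complexify it, and control the error with the triangle inequality; the identity $b^*\varphi^C(\cdot)b=\bigl(b^*\varphi(\cdot)b\bigr)^C$ that you isolate is exactly what the paper uses implicitly. That half is fine.

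The reverse implication, however, has a genuine gap. Corollary \ref{c1} is stated for a c.p.\ map $\varphi:A_\Phi\longrightarrow\mathcal{M}(B)_\Psi$ where $\Psi$ is an involutory $*$-antiautomorphism of $\mathcal{M}(B)$; in Theorem \ref{t1} the codomain is all of $\mathcal{M}(B)$ and no such $\Psi$ is given. Your phrase ``the $*$-antiautomorphism for which $\varphi^C$ is defined'' does not refer to anything: the complexification $\varphi^C(a_1+ia_2)=\varphi(a_1)+i\varphi(a_2)$ requires no antiautomorphism on the codomain, $\mathcal{M}(B)$ need not admit one, and even when it does there is no reason for $\varphi(A_\Phi)$ to land in its fixed real part. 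So the hypotheses of Corollary \ref{c1} are not met and the implication is not ``immediate.'' The paper instead proves the transfer directly: given a c.c.p.\ factorization $\|b^*\varphi^C(a)b-\psi\circ\phi(a)\|<\varepsilon$ through $M_n(\mathbb{C})$ on a finite subset of $A_\Phi$, it replaces $\phi$ by $\sigma^{(n)}\circ\phi$ and $\psi$ by $\psi\circ\rho^{(n)}$, where $\sigma:\mathbb{C}\longrightarrow M_2(\mathbb{R})$ and $\rho:M_2(\mathbb{R})\longrightarrow\mathbb{C}$ are the c.p.\ maps of \eqref{eqt1}--\eqref{eqt2} satisfying $\rho\circ\sigma=\mathrm{id}$; this produces a factorization through $M_{2n}(\mathbb{R})$ with the same composite, hence the same estimate, and restricting to $A_\Phi$ gives nuclearity of $b^*\varphi(\cdot)b$. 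You should replace the appeal to Corollary \ref{c1} by this short direct argument (or by a correctly stated lemma that nuclearity of $\psi^C$ implies nuclearity of $\psi$ for real-linear c.p.\ maps into an arbitrary complex $C^*$-algebra).
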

\begin{proof}
Let $\varphi:A_\Phi\longrightarrow\mathcal{M}(B)$ be weakly nuclear. Thus, $b^*\varphi(\cdot) b:A_\Phi\longrightarrow B$ is nuclear for every $b\in B$. Let $\varepsilon>0$ and $\mathcal{F}=\{a_1+ib_1, a_2+ib_2,\ldots, a_n+ib_n\}$ be a finite subset of $A$. Then $F=\{a_1, a_2,\ldots,a_n,b_1,b_2,\ldots, b_n\}$ is a finite subset of $A_\Phi$. Hence, there are c.c.p. linear maps $\phi:A_\Phi\longrightarrow M_n(\mathbb{R})$ and $\psi:M_n(\mathbb{R})\longrightarrow B$, for some $n\in\mathbb{N}$, such that $\|b^*\varphi(a)b-\psi\circ\phi(a)\|<\varepsilon/2$. Now, by considering the complexifications of $\phi$ and $\psi$ i.e. $\phi^C:A\longrightarrow M_n(\mathbb{C})$ and $\psi^C:M_n(\mathbb{C})\longrightarrow B$ we have the following:
\begin{eqnarray*}
  \|b^*\varphi^C(a_k+ib_k)b-\psi^C\circ\phi^C(a_k+ib_k)\| &\leq&  \|b^*\varphi(a_k)b-\psi\circ\phi(a_k)\|+ \|i\left(b^*\varphi(b_k)b-\psi\circ\phi(b_k)\right)\| \\
   &<&  \varepsilon.
\end{eqnarray*}

This implies that $b^*\varphi^C(\cdot)b:A\longrightarrow B$ is nuclear. Thus, $\varphi^C:A\longrightarrow\mathcal{M}(B)$ is weakly nuclear.

Conversely, suppose that  $\varphi^C:A\longrightarrow\mathcal{M}(B)$ is weakly nuclear. This implies that $b^*\varphi^C(\cdot)b:A\longrightarrow B$ is nuclear. Then,  for every $\varepsilon>0$ and finite subset $F$ of $A_\Phi$ there are c.c.p. linear maps $\phi:A\longrightarrow M_n(\mathbb{C})$ and $\psi:M_n(\mathbb{C})\longrightarrow B$, for some $n\in\mathbb{N}$, such that
\begin{equation}\label{eqtt}
    \|b^*\varphi^C(a)b-\psi\circ\phi(a)\|<\varepsilon,
\end{equation}
 for every $a\in F$. Define $\phi':A\longrightarrow M_{2n}(\mathbb{R})$ by $\phi'=\sigma^{(n)}\circ\phi$ and $\psi':M_{2n}(\mathbb{R})\longrightarrow B$ by $\psi'=\psi\circ\rho^{(n)}$ for $n\in\mathbb{N}$, where $\sigma$ and $\rho$ are c.p. maps defined in \eqref{eqt1} and \eqref{eqt2}. Then
\begin{equation}\label{eqt3}
    \psi'\circ\phi'=\psi\circ\phi.
\end{equation}

Thus, \eqref{eqtt} together with \eqref{eqt3} implies
\begin{equation}\label{eqt4}
    \|b^*\varphi(a)b-\psi'\circ\phi'(a)\|=\|b^*\varphi(a)b-\psi\circ\phi(a)\|<\varepsilon,
\end{equation}
 for every $a\in F$. Hence, $b^*\varphi(a)b:A_\Phi\longrightarrow\mathcal{M}(B)$ is nuclear. Therefore, $\varphi$ is weakly nuclear.
 \end{proof}

 \section{Exactness}
 Let $A$, $B$ be two $C^*$-algebras and let $A\otimes_{\text{alg}} B$ be algebraic tensor product between them. If $\Phi$ is an involutory $*$-antiautomorphism of $A$, then one can see that
 \begin{equation}\label{eq1s3}
    A\otimes_{\text{alg}} B=(A_\Phi\otimes_{\text{alg}} B)+(iA_\Phi\otimes_{\text{alg}} B).
 \end{equation}

We denote the completion of $A\otimes_{\text{alg}} B$ with the spatial (or minimal $C^*$-norm) norm $\|\cdot\|_{\min}$ by $A\otimes_{\min} B$. Thus, we write \eqref{eq1s3} as follows
 \begin{equation}\label{eq2s3}
    A\otimes_{{\min}} B=(A_\Phi\otimes_{{\min}} B)+(iA_\Phi\otimes_{{\min}} B).
 \end{equation}

 A $C^*$-algebra $A$ is said to be exact if the functor $A\otimes_{\min}-$ is exact; i.e., if every short exact sequence,
\begin{equation}\label{exact1}
    0\longrightarrow I\stackrel{\imath}{\longrightarrow}B\stackrel{\pi}{\longrightarrow}\frac{B}{I}\longrightarrow0
\end{equation}
the sequence
\begin{equation}\label{exact2}
  0\longrightarrow A\otimes_{\min}I\stackrel{id_A\otimes\imath}{\longrightarrow}A\otimes_{\min}B\stackrel{id_A\otimes\pi}{\longrightarrow}A\otimes_{\min}\frac{B}{I}\longrightarrow0
\end{equation}
is also exact, where $I$ is a closed ideal of $B$. An example of exact $C^*$-algebras is nuclear $C^*$-algebras \cite[Theorem 6.5.2]{morb}. Let $A$ be a $C^*$-algebra  and $\Phi$ be an involutory $*$-antiautomorphism of $A$. We say that $A_\Phi$ is exact if every short exact sequence of  $C^*$-algebras,
\begin{equation}\label{exact3}
    0\longrightarrow I\stackrel{\imath}{\longrightarrow}B\stackrel{\pi}{\longrightarrow}\frac{B}{I}\longrightarrow0
\end{equation}
the sequence
\begin{equation}\label{exact4}
  0\longrightarrow A_\Phi\otimes_{\min}I\stackrel{id_{A_\Phi}\otimes\imath}{\longrightarrow}
  A_\Phi\otimes_{\min}B\stackrel{id_{A_\Phi}\otimes\pi}{\longrightarrow}A_\Phi\otimes_{\min}\frac{B}{I}\longrightarrow0
\end{equation}
is also exact, where $I$ is a closed ideal of $B$. Similarly, if $A_\Phi$ is nuclear, then it is exact.

Let $\varphi:A_\Phi\longrightarrow\mathbb{C}$ be a linear bounded map. Following \cite[Theorem 1]{tom} one can define a  continuous linear map $R_\varphi:A_\Phi\otimes_{\min}B\longrightarrow B$ by $R_\varphi(a\otimes b)=\varphi(a)b$ for every $a\in A_\Phi$ and $b\in B$. Similarly, if $\psi:B\longrightarrow\mathbb{R}$ is a  bounded  linear map, then there is a continuous linear  map $L_\psi:A_\Phi\otimes_{\min}B\longrightarrow A_\Phi$ by $R_\psi(a\otimes b)=\psi(b)a$ for every $a\in A_\Phi$ and $b\in B$. Suppose that $A_1$ and $B_1$ are  $C^*$-subalgebras of $A_\Phi$ and $B$, respectively. For every bounded linear mappings $\varphi:A_\Phi\longrightarrow\mathbb{C}$ (is real-linear) and $\psi:B \longrightarrow\mathbb{R}$, the Fubini product of $A_1$ and $B_1$ with respect to $A_\Phi$ and $B_\Psi$ is
\begin{equation}\label{fp}
    F(A_1, B_1, A_\Phi\otimes B)=\{x\in  A_\Phi\otimes B :\ R_\varphi(x)\in B_1,\ L_\psi(x)\in A_1\}.
\end{equation}

Moreover, for every non-zero $a\otimes b\in A_\Phi\otimes B$, $\varphi\otimes \psi(a\otimes b)=\varphi(a)\psi(b)$ is a bounded linear map on $A_\Phi\otimes B$ and it is easy to see that the set of such $\varphi\otimes \psi$ mappings separates  $A_\Phi\otimes B$. Also, from \cite[Theorem 1]{tom} we have $\varphi\otimes\psi(x)=\psi\left(R_\varphi(x)\right)=\varphi\left(L_\psi(x)\right)$. This shows that both $R_\varphi$ and $L_\psi$ separate $A_\Phi\otimes B$ for every $\varphi$ and $\psi$ defined as above.
\begin{theorem}
Let $A$ be a $C^*$-algebra and $\Phi$ be an involutory $*$-antiautomorphism of $A$. Then $A$ is exact if and only if $A_\Phi$ is exact.
\end{theorem}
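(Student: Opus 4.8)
The plan is to reduce the claim to a single slice--map (Fubini product) condition and then transport that condition across the complexification using the decomposition \eqref{eq2s3}. For a fixed short exact sequence $0\to I\xrightarrow{\imath}B\xrightarrow{\pi}B/I\to0$, the inclusion $\mathrm{id}\otimes\imath$ is automatically isometric (injectivity of the minimal norm), and $\mathrm{id}\otimes\pi$, being a $*$-homomorphism with dense range, is automatically surjective; hence the only content of exactness is the middle identity
\begin{equation*}
\ker(\mathrm{id}\otimes\pi)=\overline{(\mathrm{id}\otimes\imath)(-\otimes I)},
\end{equation*}
equivalently the Fubini condition $F(A,I,A\otimes B)=A\otimes_{\min}I$ phrased through the slice maps $R_\varphi$ of \eqref{fp}. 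So I would first reformulate both ``$A$ is exact'' and ``$A_\Phi$ is exact'' as this kernel--equals--image statement, required to hold for all $B$ and all closed ideals $I$.

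Next I would record the compatibility of \eqref{eq2s3} with the structure maps. Because $\imath$ and $\pi$ are $*$-homomorphisms, the $\mathbb{C}$-linear maps $\mathrm{id}_A\otimes\imath$ and $\mathrm{id}_A\otimes\pi$ carry the real part $A_\Phi\otimes_{\min}(-)$ into the real part and commute with multiplication by $i$; thus each map in the complex tensor sequence restricts to the corresponding map in the real tensor sequence, and \eqref{eq2s3} applied simultaneously to $I$, $B$ and $B/I$ splits the complex sequence into its real and imaginary halves. The key book-keeping lemma I need here is that the decomposition is \emph{honest}, namely
\begin{equation*}
(A_\Phi\otimes_{\min}B)\cap(A\otimes_{\min}I)=A_\Phi\otimes_{\min}I,
\end{equation*}
together with the statement that an element of $\ker(\mathrm{id}_A\otimes\pi)$ lies in the real part if and only if both of its components do. I would establish this using the slice maps: since $R_\varphi$ and $L_\psi$ separate $A_\Phi\otimes B$ and a real--linear functional $\varphi:A_\Phi\to\mathbb{C}$ extends to $\varphi^{C}:A\to\mathbb{C}$ with $R_{\varphi^{C}}$ restricting to $R_\varphi$, membership in the relevant subspaces can be tested by the same family of slice maps on both sides.

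For the forward direction, assume $A$ is exact and take $x\in\ker(\mathrm{id}_{A_\Phi}\otimes\pi)$. Viewing $x$ inside $A\otimes_{\min}B$ and using that $\mathrm{id}_{A_\Phi}\otimes\pi$ is the restriction of $\mathrm{id}_A\otimes\pi$ (the target $A_\Phi\otimes_{\min}B/I$ sitting inside $A\otimes_{\min}B/I$), exactness of $A$ gives $x\in A\otimes_{\min}I$; the honesty lemma then places $x$ in $A_\Phi\otimes_{\min}I$, which is exactly what real exactness requires. For the converse, assume $A_\Phi$ is exact and take $x\in\ker(\mathrm{id}_A\otimes\pi)$. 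Writing $x=x_1+ix_2$ with $x_1,x_2\in A_\Phi\otimes_{\min}B$ via \eqref{eq2s3}, the compatibility of $\pi$ with the splitting forces each $x_j$ into $\ker(\mathrm{id}_{A_\Phi}\otimes\pi)$, whence $x_j\in A_\Phi\otimes_{\min}I$ by exactness of $A_\Phi$; therefore $x=x_1+ix_2\in A\otimes_{\min}I$ by \eqref{eq2s3} applied to $I$.

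The main obstacle is precisely the honesty lemma, i.e.\ controlling the decomposition \eqref{eq2s3} after completion in the minimal norm: a priori the real and imaginary summands could overlap, or the kernel could fail to split along the real/imaginary decomposition, once one passes from the algebraic tensor product \eqref{eq1s3} to its completion. I expect to handle this exactly as the preceding discussion is engineered to allow --- by testing membership with the separating slice maps $R_\varphi$ and $L_\psi$ and matching real--linear functionals on $A_\Phi$ with their complexifications on $A$ --- so that the purely algebraic identity \eqref{eq1s3} survives completion and $\ker(\mathrm{id}\otimes\pi)$ decomposes compatibly with the real/imaginary splitting.
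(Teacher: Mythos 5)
Your proposal follows essentially the same route as the paper: reduce exactness to the slice-map/Fubini condition of Kirchberg, transport it across the complexification by extending real-linear functionals $\varphi$ on $A_\Phi$ to $\varphi^{C}$ on $A$ in the forward direction, and split $x=x_1+ix_2$ along $A=A_\Phi+iA_\Phi$ in the converse. The one point you flag as the main obstacle --- that the decomposition and the intersection identity $(A\otimes_{\min}I)\cap(A_\Phi\otimes_{\min}B)=A_\Phi\otimes_{\min}I$ survive completion --- is exactly the step the paper also uses (its \eqref{eq2s3} and \eqref{exact7}) and likewise does not prove in detail, so you are, if anything, more candid about where the real work lies.
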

\begin{proof}
Let $A$ be an exact $C^*$-algebra. Suppose that $B$ is an arbitrary $C^*$-algebra and $I$ is a closed ideal in $B$, where they are satisfied in \eqref{exact3}.   We must show that the sequence \eqref{exact4} is exact. For this, it is sufficient that we show $ F(A_\Phi, I, A_\Phi\otimes B) =A_\Phi\otimes_{\min}I$ \cite[Theorem 1.1]{ki}.

For every linear bounded map $\varphi:A_\Phi\longrightarrow\mathbb{C}$, we denote its complexification by $\varphi^C$. Then $\varphi^C|_{A_\Phi}=\varphi$ and $R_{\varphi}(x)=R_{\varphi^C}(x)$ for every $x\in A_\Phi\otimes B_\Psi$. Moreover,
 \begin{eqnarray}\label{exact5}
 \nonumber
    R_{\varphi^C}\circ(id_A\otimes\pi)(a\otimes b)&=&R_{\varphi^C}(a\otimes\pi(b))=\varphi^C(a)\pi(b)\\
    &=&\pi\circ R_{\varphi^C}(a\otimes b),
 \end{eqnarray}
for every $a\otimes b\in A\otimes_{\min} B$. Since $\varphi$ separates $A_\Phi$, $\varphi^C$ separates $A$ and consequently, \eqref{exact5} implies that $R_{\varphi^C}$ separates $A\otimes_{\min}B$. Consider the exact sequence
\begin{equation}\label{exact6}
  0\longrightarrow A\otimes_{\min}I^C\stackrel{id_{A}\otimes\imath^C}{\longrightarrow}A\otimes_{\min}B
  \stackrel{id_{A}\otimes\pi}{\longrightarrow}A\otimes_{\min}\frac{B}{I}\longrightarrow0
\end{equation}

 Since $R_{\varphi^C}$ separates $A\otimes_{\min}B$, $\ker (id_{A}\otimes\pi)=F(A, I, A\otimes_{\min}B)$. Exactness of $A$ implies that $F(A, I, A\otimes_{\min}B)=A\otimes_{\min}I$. Let $x\in F(A_\Phi, I, A_\Phi\otimes B)$. Then $R_\varphi(x)=R_{\varphi^C}(x)\in I$. Thus, $x\in A\otimes_{\min}I$. This means that
 \begin{equation}\label{exact7}
    x\in (A\otimes_{\min}I)\cap(A_\Phi\otimes B)=A_\Phi \otimes_{\min}I.
 \end{equation}

 This implies that $A_\Phi$ is exact.

 Conversely, assume that $A_\Phi$ is exact. Thus, $iA_\phi$ is exact, too. We must show that $\ker(id_A\otimes \pi)=A\otimes I$, where $id_A\otimes \pi:A\otimes_{\min} B\longrightarrow A\otimes_{\min}\frac{B}{I}$. Set $A_1=A_\Phi$, $A_2=iA_\Phi$ and denote $A_1+A_2=\coprod_{i=1}^2A_i$. Since $A_i$'s for $i=1,2$ are exact, $\ker(id_{A_i}\otimes \pi)=A_i\otimes I$.  Then the diagram\\
\begin{equation*}
\begin{CD}
A\otimes_{\min} B @> \ \ \ \ id_A\otimes \pi\ \ \ >> A\otimes_{\min}\frac{B}{I}  \\
f @VV V @VV gV\\
 \coprod_{i=1}^2 (A_i\otimes_{\min}B) @> \coprod_{i=1}^2(id_{A_i}\otimes \pi)>> \coprod_{i=1}^2 (A_i\otimes_{\min}\frac{B}{I})
\end{CD}
\end{equation*}
commutes. Indeed $f$ and $g$ are the identity maps, hence,
\begin{eqnarray}\label{exact8}
\nonumber
    \ker(id_A\otimes \pi)&=&\ker\coprod_{i=1}^2(id_{A_i}\otimes \pi)=\coprod_{i=1}^2(A_i\otimes_{\min}I)\\
    &=&A\otimes_{\min}I.
\end{eqnarray}

This implies that $A$ is exact.
\end{proof}

By the above result and \cite[Proposition 6.1.10]{ro} we have the following results.
\begin{corollary}
Let $A$ be a $C^*$-algebra and $\Phi$ be an involutory $*$-antiautomorphism of $A$. If $A_\Phi$ is  exact  and if $G$ is an amenable, locally compact group acting on $A$, then the crossed product $A \rtimes G$ is exact.
\end{corollary}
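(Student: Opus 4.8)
The plan is to combine the equivalence established in the preceding theorem with a standard permanence property of exactness under crossed products by amenable groups. First I would invoke the hypothesis that $A_\Phi$ is exact; by the theorem just proved, the exactness of $A_\Phi$ is equivalent to the exactness of its complexification $A$. This transfer is the only substantive step, and it is exactly where the earlier machinery (the Fubini-product characterization of exactness and the separation properties of $R_{\varphi^C}$) pays off. After this reduction, the statement becomes a purely complex assertion about the ordinary $C^*$-algebra $A$, which is now known to be exact.

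Next I would apply \cite[Proposition 6.1.10]{ro}, which records that exactness of a $C^*$-algebra is inherited by its crossed product by an amenable locally compact group. Since $G$ is amenable and acts on $A$, and since $A$ has just been shown to be exact, this proposition yields immediately that $A \rtimes G$ is exact. No further estimates or constructions are needed: the corollary is the composition of one equivalence and one citation.

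The only point that I would flag as requiring a word of care is the identification of the crossed product itself. Because $G$ is amenable, the full and reduced crossed products $A \rtimes G$ and $A \rtimes_r G$ coincide, so the notation $A \rtimes G$ is unambiguous and the cited proposition applies to either version. I do not expect any genuine obstacle in this argument; the mathematical content lies entirely in the passage from $A_\Phi$ to $A$ furnished by the theorem, and once $A$ is known to be exact the amenability of $G$ closes the argument at once.
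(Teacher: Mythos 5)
Your proposal matches the paper exactly: the paper derives this corollary by combining the preceding theorem (exactness of $A_\Phi$ implies exactness of $A$) with \cite[Proposition 6.1.10]{ro} on crossed products by amenable groups. No differences to report.
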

\begin{corollary}
Let $A$, $B$ be two $C^*$-algebras and $\Phi$, $\Psi$ be  involutory $*$-antiautomorphisms of $A$ and $B$, respectively. Then $A_\Phi$ and $B_\Psi$ are  exact   if and only if $A_\Phi \otimes_{\min} B_\Psi$ is exact.
\end{corollary}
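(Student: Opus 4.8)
The plan is to transfer everything to the complexifications and then invoke the classical complex-case result. By the preceding theorem, $A_\Phi$ is exact if and only if $A$ is exact, and likewise $B_\Psi$ is exact if and only if $B$ is exact. By \cite[Proposition 6.1.10]{ro}, the complex $C^*$-algebras $A$ and $B$ are both exact if and only if $A\otimes_{\min}B$ is exact. Hence it remains only to relate exactness of $A\otimes_{\min}B$ to exactness of $A_\Phi\otimes_{\min}B_\Psi$, and this too will follow from the preceding theorem once the correct involutory $*$-antiautomorphism on $A\otimes_{\min}B$ has been identified.

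Concretely, I would first show that $\Phi\otimes\Psi$ extends to an involutory $*$-antiautomorphism of $A\otimes_{\min}B$. The algebraic map $\Phi\otimes\Psi$ is complex-linear and, because $\Phi$ and $\Psi$ reverse products, a direct check on elementary tensors gives $(\Phi\otimes\Psi)(xy)=(\Phi\otimes\Psi)(y)\,(\Phi\otimes\Psi)(x)$; similarly $(\Phi\otimes\Psi)(x^*)=\bigl((\Phi\otimes\Psi)(x)\bigr)^*$ and $(\Phi\otimes\Psi)^2=\mathrm{id}$. Viewing $\Phi$ and $\Psi$ as $*$-isomorphisms onto the opposite algebras $A^{op}$ and $B^{op}$ shows that $\Phi\otimes\Psi$ is isometric for $\|\cdot\|_{\min}$, so it extends to all of $A\otimes_{\min}B$.

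Next I would identify the real part. For $a\in A_\Phi$ and $b\in B_\Psi$ one has $(\Phi\otimes\Psi)(a\otimes b)=\Phi(a)\otimes\Psi(b)=a^*\otimes b^*=(a\otimes b)^*$, so $A_\Phi\otimes_{\mathrm{alg}}B_\Psi\subseteq (A\otimes_{\min}B)_{\Phi\otimes\Psi}$. Using the decompositions $A=A_\Phi+iA_\Phi$ and $B=B_\Psi+iB_\Psi$ together with \eqref{eq2s3}, the complexification of $A_\Phi\otimes_{\min}B_\Psi$ is exactly $A\otimes_{\min}B$ and the real part $(A\otimes_{\min}B)_{\Phi\otimes\Psi}$ coincides with $A_\Phi\otimes_{\min}B_\Psi$. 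Applying the preceding theorem to the $C^*$-algebra $A\otimes_{\min}B$ with the involutory $*$-antiautomorphism $\Phi\otimes\Psi$ then yields that $A\otimes_{\min}B$ is exact if and only if $A_\Phi\otimes_{\min}B_\Psi$ is exact. Chaining the three equivalences produces the claim.

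The main obstacle I expect is the identification in the previous paragraph: verifying that the minimal-tensor-norm completion of the real algebraic tensor product $A_\Phi\otimes_{\mathrm{alg}}B_\Psi$ is precisely the $(\Phi\otimes\Psi)$-real part of $A\otimes_{\min}B$, rather than a proper real subalgebra of it. This requires knowing that complexification commutes with the minimal tensor product and that $\|\cdot\|_{\min}$ on the real tensor product is the restriction of the complex minimal norm. Once this compatibility is in hand, the remainder is a routine chaining of equivalences.
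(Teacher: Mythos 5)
Your argument is correct and is essentially the paper's own (the paper offers no written proof, merely asserting that the corollary follows from the preceding theorem together with \cite[Proposition 6.1.10]{ro}, which is exactly the chain of equivalences you describe). You in fact supply the one nontrivial ingredient the paper leaves implicit, namely that $\Phi\otimes\Psi$ is an involutory $*$-antiautomorphism of $A\otimes_{\min}B$ whose real part is $A_\Phi\otimes_{\min}B_\Psi$, and your sketch of that identification is sound.
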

\section{Quasidiagonality}
A  unital $C^*$-algebra $A$ is called quasidiagonal, if, for every finite subset $\mathcal{F}$ of $A$ and $\varepsilon>0$, there exist a matrix algebra $M_k(\mathbb{C})$ and a unital c.p. linear map $\varphi:A\longrightarrow M_k(\mathbb{C})$ such that
\begin{equation}\label{eq1qd}
    \|\varphi(ab)-\varphi(a)\varphi(b)\|<\varepsilon,\hspace{1cm} a,b\in \mathcal{F},
\end{equation}
and
\begin{equation}\label{eq2qd}
   |\ \|\varphi(a)\|-\|a\||<\varepsilon,\hspace{2.5cm} a\in \mathcal{F}.
\end{equation}

Let $A$ be a $C^*$-algebra and $\Phi$ be an involutory $*$-antiautomorphism of $A$. Then for every $c=a+ib\in A$, we consider $\|c\|=\|a\|+\|b\|$ and for considering  the qusidiagonality of $A_\Phi$ we replace $M_k(\mathbb{C})$ by $M_k(\mathbb{R})$. We work with the $\|\cdot\|_1$ on  $M_k(\mathbb{R})$.

\begin{theorem}\label{mtqd}
Let $A$ be a $C^*$-algebra and $\Phi$ be an involutory $*$-antiautomorphism of $A$. Then $A$ is quasidiagonal if and only if $A_\Phi$ is quasidiagonal.
\end{theorem}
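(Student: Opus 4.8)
The plan is to prove both implications by transporting a quasidiagonalizing map across the correspondence between $A$ and $A_\Phi$, in the same spirit as the two halves of Theorem \ref{t1}: the forward implication through the c.p. $*$-homomorphism $\sigma$ of \eqref{eqt1}, and the converse through complexification. The two norm conventions fixed just above---namely $\|a+ib\|=\|a\|+\|b\|$ on $A$ and the corresponding $\|\cdot\|_1$ on the real matrix algebras---are precisely what make the two defect estimates \eqref{eq1qd} and \eqref{eq2qd} survive the transfer, so I would keep them in force throughout and check at each step that $\sigma$ and the complexification respect them.

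For the forward direction, assume $A$ is quasidiagonal and fix a finite set $\mathcal{F}\subseteq A_\Phi\subseteq A$ together with $\varepsilon>0$. Quasidiagonality of $A$ produces $k\in\mathbb{N}$ and a unital c.p. map $\varphi:A\longrightarrow M_k(\mathbb{C})$ satisfying \eqref{eq1qd} and \eqref{eq2qd} on $\mathcal{F}$. I would then put $\phi=\sigma^{(k)}\circ\varphi|_{A_\Phi}:A_\Phi\longrightarrow M_{2k}(\mathbb{R})$. Since $\Phi$ is a $*$-antiautomorphism, $A_\Phi$ is closed under multiplication, so $ab\in A_\Phi$ for $a,b\in\mathcal{F}$ and $\phi$ is well defined; moreover $\sigma$ is a unital injective $*$-homomorphism, hence isometric, and therefore $\sigma^{(k)}$ is an isometric unital $*$-homomorphism of $M_k(\mathbb{C})$ into $M_{2k}(\mathbb{R})$. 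Consequently $\phi$ is unital and c.p., and from $\phi(ab)-\phi(a)\phi(b)=\sigma^{(k)}\bigl(\varphi(ab)-\varphi(a)\varphi(b)\bigr)$ and $\|\phi(a)\|=\|\varphi(a)\|$ the estimates \eqref{eq1qd} and \eqref{eq2qd} for $\varphi$ pass verbatim to $\phi$, giving the quasidiagonality of $A_\Phi$.

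For the converse, assume $A_\Phi$ is quasidiagonal and fix a finite set $\mathcal{F}=\{a_1+ib_1,\ldots,a_n+ib_n\}\subseteq A$ and $\varepsilon>0$. Exactly as in Theorem \ref{t1} I would pass to the finite set $F=\{a_1,\ldots,a_n,b_1,\ldots,b_n\}\subseteq A_\Phi$ and apply quasidiagonality of $A_\Phi$ to obtain a unital c.p. map $\phi:A_\Phi\longrightarrow M_k(\mathbb{R})$ that is approximately multiplicative and approximately isometric on $F$. Its complexification $\phi^C:A\longrightarrow M_k(\mathbb{C})$ is again unital and c.p. by \cite[Proposition 2]{ho}. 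Expanding $(a+ib)(a'+ib')=(aa'-bb')+i(ab'+ba')$ and using $\phi^C(a+ib)=\phi(a)+i\phi(b)$, the difference $\phi^C(cc')-\phi^C(c)\phi^C(c')$ becomes a combination of the four defect terms $\phi(xy)-\phi(x)\phi(y)$ for $(x,y)\in\{(a,a'),(b,b'),(a,b'),(b,a')\}$, each factor of which lies in $F$; hence each term is small by \eqref{eq1qd} for $\phi$ and, in the norm $\|\cdot\|_1$, so is their sum. For the isometry I would invoke the matching of conventions directly: since $\phi^C(c)=\phi(a)+i\phi(b)$ one has $\|\phi^C(c)\|_1=\|\phi(a)\|+\|\phi(b)\|\approx\|a\|+\|b\|=\|c\|$, which is \eqref{eq2qd} for $\phi^C$. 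Thus $A$ is quasidiagonal.

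I expect the genuine difficulty to be the bookkeeping of norms rather than any conceptual step. One must ensure that the defect quantities \eqref{eq1qd} and \eqref{eq2qd} are read in mutually compatible norms across both passages $\varphi\mapsto\sigma^{(k)}\circ\varphi$ and $\phi\mapsto\phi^C$; this is exactly where the isometry of $\sigma$ (equivalently, the relation $\rho\circ\sigma=\mathrm{id}$) and the additive conventions $\|a+ib\|=\|a\|+\|b\|$ and $\|\cdot\|_1$ are indispensable, and any discrepancy between the operator norm and $\|\cdot\|_1$ on $M_k(\mathbb{C})$ must be absorbed by shrinking $\varepsilon$ and enlarging $k$ at the outset. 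The only other point needing care is to confirm that the single finite set $F$ of real and imaginary parts already controls all four mixed products appearing in the expansion of $cc'$, so that one map $\phi$ suffices for the whole of $\mathcal{F}$.
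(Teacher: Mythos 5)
Your proposal follows the same two\--pronged strategy as the paper: complexification $\phi\mapsto\phi^C$ for the implication ``$A_\Phi$ quasidiagonal $\Rightarrow$ $A$ quasidiagonal'' (your treatment of this half --- passing to the set $F$ of real and imaginary parts, expanding $\phi^C(cc')-\phi^C(c)\phi^C(c')$ into four defect terms, and splitting the norm estimate via the additive convention --- is essentially identical to the paper's), and composition with the map $\sigma$ of \eqref{eqt1} for the other implication. Where you genuinely differ is in that second half: the paper composes with a normalized map $\theta^{(k)}$ obtained by dividing $\sigma^{(k)}$ by a constant that depends on the input matrix (so, as written, $\theta^{(k)}$ is not even linear, yet the displayed estimates treat it as a multiplicative linear contraction), whereas you use $\sigma^{(k)}$ itself, observing that it is a unital $*$-homomorphism and hence isometric for the operator norms, so both defect estimates pass through verbatim. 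Read with genuine $C^*$-norms, your version of this direction is cleaner and correct where the paper's is shaky; you also correctly note that $\varphi|_{A_\Phi}$ suffices in place of the paper's $\varphi\circ\Phi\circ *$, since $\Phi\circ *$ is the identity on $A_\Phi$.

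The one concrete weak point is your proposed fix for the norm mismatch: the claim that ``any discrepancy between the operator norm and $\|\cdot\|_1$ on $M_k(\mathbb{C})$ must be absorbed by shrinking $\varepsilon$ and enlarging $k$'' does not work. The two norms on $M_{2k}(\mathbb{R})$ differ by multiplicative factors that grow with $k$, so an additive error of $\varepsilon$ in one norm does not become small in the other, and $k$ is produced by the quasidiagonality hypothesis rather than chosen by you. Either one reads quasidiagonality of $A_\Phi$ with the operator norm --- in which case your forward direction stands as written and the caveat is unnecessary --- or one adopts the paper's $\|\cdot\|_1$ convention, in which case $\sigma^{(k)}$ is no longer isometric and the approximate\--isometry estimate \eqref{eq2qd} must be re-derived. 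A related tension sits in the converse direction, where both you and the paper rely on the ad hoc convention $\|a+ib\|=\|a\|+\|b\|$ to reduce $\left|\ \|\phi^C(c)\|-\|c\|\ \right|$ to the two real estimates; with the honest $C^*$-norm on $A$ this reduction fails, and that is the genuine difficulty of the theorem, unaddressed by either argument. You should commit to one consistent choice of norms at the outset rather than deferring the reconciliation.
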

\begin{proof}
Let $A_\Phi$ be quasidiagonal and let $\mathcal{F}_A=\{c_1=a_1+ib_1, c_2=a_2+ib_2,\ldots, c_n=a_n+ib_n\}$ be a finite subset in $A$. Set $\mathcal{F}=\{a_1, a_2,\ldots, a_n,b_1, b_2,\ldots,b_n\}$ is a finite subset in $A_\Phi$. For every $\varepsilon>0$, there exist a matrix algebra $M_k(\mathbb{R})$ and a unital c.p. real-linear map $\varphi:A_\Phi\longrightarrow M_k(\mathbb{R})$ such that
\begin{equation}\label{eq1qd}
    \|\varphi(ab)-\varphi(a)\varphi(b)\|<\frac{\varepsilon}{4},\hspace{1cm} a,b\in \mathcal{F},
\end{equation}
and
\begin{equation}\label{eq2qd}
    \left|~ \|\varphi(a)\|-\|a\|\right|<\frac{\varepsilon}{4},\hspace{2.5cm} a\in \mathcal{F}.
\end{equation}

Suppose that $\varphi^C:A\longrightarrow M_k(\mathbb{C})$ is the complexification of $\varphi$ that is a c.p. map. Then
\begin{eqnarray*}
  \|\varphi^C(c_kc_l)-\varphi(c_k)^C\varphi^C(c_l)\| &=&\|\varphi^C\left([a_ka_l-b_kb_l]+i[b_ka_l+a_kb_l]\right)-\varphi^C(a_k+ib_k)\varphi^C(a_k+ib_k)\|  \\
   &\leq& \|\varphi(a_ka_l-b_kb_l)-\varphi(a_k)\varphi(a_l)+\varphi(b_k)\varphi(b_l)\|\\
   &&\hspace{-0.3cm} +\|i[\varphi(b_ka_l+a_kb_l)- \varphi(b_k)\varphi(a_l)-\varphi(a_k)\varphi(b_l)\|  \\
   &<& \frac{\varepsilon}{4}+\frac{\varepsilon}{4}+\frac{\varepsilon}{4}+\frac{\varepsilon}{4} \\
   &=&\varepsilon,
\end{eqnarray*}
for every $c_k, c_l\in \mathcal{F}_A$. Also,
\begin{eqnarray*}
 |~ \|\varphi^C(c_k)\|-\|c_k\| ~|&=&|~\|\varphi^C(a_k+ib_k)\|-\|(a_k+ib_k)\|~|  \\
   &=&|~ \|\varphi(a_k)\|-\|a_k\|+\|\varphi(b_k)\|-\|b_k\|~|\\
   &<& \frac{\varepsilon}{4}+\frac{\varepsilon}{4} \\
   &<&\varepsilon,
\end{eqnarray*}
for every $c_k\in \mathcal{F}_A$.

Let $\sigma:\mathbb{C}\longrightarrow M_2(\mathbb{R})$ be as \eqref{eqt1}. It easy to check that $\sigma$ and $\sigma^{(k)}:M_k(\mathbb{C})\longrightarrow M_{2k}(\mathbb{R})$ for some $k\in\mathbb{N}$ are c.c.p. homomorphisms. Now, define $\theta^{(k)}:M_k(\mathbb{C})\longrightarrow M_{2k}(\mathbb{R})$ by
$$\theta^{(k)}([a_{jl}+ib_{jl}])=\frac{1}{\max_{1\leq l\leq k}\sum_{j=1}^k(|a_{jl}|+|b_{jl}|)+1}\sigma^{(k)}([a_{jl}+ib_{jl}]),$$
 for $[a_{jl}+ib_{jl}]\in M_k(\mathbb{C})$ and $1\leq j,l\leq k$. Then $\theta^{(k)}$ is a c.c.c.p.  homomorphism.
  Assume that $A$ is quasidiagonal. Thus, for every finite subset $\mathcal{F}$ of $A_\Phi$, $\varepsilon>0$, there exist matrix algebra $M_k(\mathbb{C})$ and c.p. linear map $\varphi:A\longrightarrow M_k(\mathbb{C})$ such that \eqref{eq1qd} and \eqref{eq2qd} hold for every $a\in\mathcal{F}$. Define $\varphi':A\longrightarrow M_{2k}(\mathbb{R})$ by $\varphi'=\theta^{(k)}\circ\varphi\circ\Phi\circ*$. Clearly, $\Phi\circ*$ is a real-linear homomorphism and $\theta^{(k)}$ is real-linear, so $\varphi'$ is a c.c.p. real-linear map. Then
\begin{eqnarray*}
  \|\varphi'(ab)-\varphi'(a)\varphi'(b)\| &=&\|\theta^{(k)}\circ\varphi\circ\Phi\circ*(ab)-\theta^{(k)}\circ\varphi\circ\Phi\circ*(a)\ \theta^{(k)}\circ\varphi\circ\Phi\circ*(b)\|  \\
   &=& \|\theta^{(k)}\circ\varphi(ab)-\theta^{(k)}\circ\varphi(a)\ \theta^{(k)}\circ\varphi(b)\|\\
   &=& \|\theta^{(k)}\left(\varphi(ab)-\varphi(a)\varphi(b)\right)\| \\
   &\leq&\|\theta^{(k)}\|\ \|\varphi(ab)-\varphi(a)\varphi(b)\|\\
   &<&\varepsilon,
\end{eqnarray*}
for every $a,b\in \mathcal{F}$. Also,
\begin{eqnarray*}
  |~\|\varphi'(a)\|-\|a\|~| &=&|~\|\theta^{(k)}\circ\varphi\circ\Phi\circ*(a)\|-\|a\|~| =|~\|\theta^{(k)}\circ\varphi(a)\|-\|a\|~| \\
   &\leq& |~\|\theta^{(k)}\|~\|\varphi(a)\|-\|a\|~|\leq|~\|\varphi(a)\|-\|a\|~|\\
   &<& \varepsilon,
\end{eqnarray*}
for every $a\in \mathcal{F}$.
\end{proof}

Let $\mathcal{Q}$ be the universal UHF algebra, $\omega$ be a free ultrafilter on $\mathbb{N}$ and let $\mathcal{Q}_\omega$ be the ultrapower of $\mathcal{Q}$ defined by
\begin{equation}\label{up}
    \mathcal{Q}_\omega:=\ell^\infty(\mathcal{Q})/\{(a_n)\in\ell^\infty(\mathcal{Q})\ :\ \lim_{n\to\infty}\|a_n\|=0\}.
\end{equation}

The above Theorem and \cite[Proposition 1.4]{tww} imply the following result:
\begin{corollary}\label{fc}
Let $A$ be a separable, unital and nuclear $C^*$-algebra and $\Phi$ be an involutory $*$-antiautomorphism of $A$. Then the following statements are equivalent:
\begin{itemize}
  \item[(i)] $A_\Phi$ is quasidiagonal;
  \item[(ii)] there exists a unital embedding $A\hookrightarrow \mathcal{Q}_\omega$.
\end{itemize}
\end{corollary}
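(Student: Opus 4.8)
The plan is to deduce the corollary by chaining two equivalences that are already available: the real-to-complex transfer of quasidiagonality given by Theorem \ref{mtqd}, and the complex characterization of quasidiagonality via unital embeddings into the ultrapower $\mathcal{Q}_\omega$ recorded in \cite[Proposition 1.4]{tww}. First I would invoke Theorem \ref{mtqd} to replace statement (i) about the real algebra $A_\Phi$ by the corresponding statement about its complexification, namely that $A_\Phi$ is quasidiagonal if and only if $A$ is quasidiagonal. This application is legitimate precisely because $A$ is unital (so the notion of quasidiagonality from the preceding section is defined) and $\Phi$ is an involutory $*$-antiautomorphism of $A$, which are exactly the hypotheses of Theorem \ref{mtqd}.

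Next I would apply \cite[Proposition 1.4]{tww} to the \emph{complex} algebra $A$. Since $A$ is assumed separable, unital, and nuclear---which are the standing hypotheses of that proposition---one obtains that $A$ is quasidiagonal if and only if there exists a unital embedding $A\hookrightarrow\mathcal{Q}_\omega$, where $\mathcal{Q}_\omega$ is the algebra defined in \eqref{up}; this is precisely statement (ii). Combining the two equivalences gives (i) $\Leftrightarrow$ $A$ quasidiagonal $\Leftrightarrow$ (ii), which is the assertion of the corollary. No intermediate estimate beyond those already established in Theorem \ref{mtqd} is required, so the deduction is short.

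The genuine content is entirely contained in the two cited inputs, and the only point I would check carefully is that their hypotheses line up with ours, together with the role of nuclearity in \cite[Proposition 1.4]{tww}. The subtle direction there is passing from a unital embedding into $\mathcal{Q}_\omega$ back to the asymptotically multiplicative, asymptotically isometric matrix approximations demanded by the definition of quasidiagonality: this step lifts the unital $*$-homomorphism (a completely positive map) through the quotient defining $\mathcal{Q}_\omega$ by a Choi--Effros type completely positive lifting, which needs nuclearity of $A$, and then compresses the resulting $\mathcal{Q}$-valued completely positive maps to $M_k(\mathbb{C})$-valued ones using that $\mathcal{Q}$ is an inductive limit of matrix algebras. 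Since this is exactly what \cite[Proposition 1.4]{tww} supplies, I would cite it rather than reprove it; thus the main (and only) obstacle is bookkeeping of hypotheses, not a new analytic argument.
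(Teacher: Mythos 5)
Your proposal matches the paper's own (implicit) argument exactly: the paper derives the corollary by combining Theorem \ref{mtqd} with \cite[Proposition 1.4]{tww}, precisely the two-step chain of equivalences you describe. The additional remarks on why nuclearity is needed in the cited proposition are sound commentary but not required, since that result is cited rather than reproved.
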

\begin{corollary}\cite[Theorem 2.2]{or}\label{fc1}
Let $A$ be a separable, unital  $C^*$-algebra, $\Phi$ be an involutory $*$-antiautomorphism of $A$ and let $G$ be a discrete countable amenable and residually finite group with a sequence of F{\o}lner sets $F_n$ and tilings of the form $G = K_nL_n$ with $F_n \subset K_n$ for all $n\in \mathbb{N}$. Let $\alpha:G\longrightarrow \text{Aut}(A)$ be a homomorphism such that
$$\lim_{n\to\infty}\left(\max_{g\in L_n\cap K_nK_n^{-1}F_n}\|\alpha(g)a-a\|\right)=0.$$

If $A_\Phi$ is quasidiagonal, then $A\rtimes_\alpha G$ is quasidiagonal.
\end{corollary}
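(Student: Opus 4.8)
The plan is to reduce the assertion to the corresponding crossed-product result for complex $C^*$-algebras, namely \cite[Theorem 2.2]{or}, by first converting the hypothesis on the real $C^*$-algebra $A_\Phi$ into a hypothesis on its complexification $A$. The only genuinely new ingredient relative to the cited theorem is the bridge between quasidiagonality of $A_\Phi$ and quasidiagonality of $A$, and this has already been established as Theorem \ref{mtqd} in this section. Note that, unlike Corollary \ref{fc}, the present statement does not assume nuclearity, but Theorem \ref{mtqd} is an unconditional equivalence and so imposes no such requirement.

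First I would observe that, since $A_\Phi$ is quasidiagonal by assumption, Theorem \ref{mtqd} immediately yields that the complexification $A$ is quasidiagonal. This is the decisive reduction: every remaining hypothesis of the corollary is phrased directly in terms of $A$, the group $G$, and the action $\alpha:G\longrightarrow\mathrm{Aut}(A)$, and these are precisely the hypotheses occurring in \cite[Theorem 2.2]{or}.

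Next I would check that the data $(A,G,\alpha)$ meet the exact conditions of \cite[Theorem 2.2]{or}: that $A$ is separable and unital (as assumed), that $G$ is discrete, countable, amenable and residually finite with the prescribed F{\o}lner sets $F_n$ and tilings $G=K_nL_n$ satisfying $F_n\subset K_n$, and that the approximate-invariance condition
\[
\lim_{n\to\infty}\Big(\max_{g\in L_n\cap K_nK_n^{-1}F_n}\|\alpha(g)a-a\|\Big)=0
\]
holds. Since these are transcribed verbatim from the hypotheses of the corollary, no additional verification is needed at this stage.

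Finally, with $A$ now known to be quasidiagonal and all the structural hypotheses on $G$ and $\alpha$ in force, I would invoke \cite[Theorem 2.2]{or} to conclude that the crossed product $A\rtimes_\alpha G$ is quasidiagonal, which is exactly the desired conclusion. I expect the substantive step to be the very first one, the passage from $A_\Phi$ to $A$ through Theorem \ref{mtqd}; the remainder is a direct application of the cited result, so there is no real obstacle beyond correctly matching the hypotheses.
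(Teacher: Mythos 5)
Your proposal is correct and matches the paper's (implicit) argument exactly: the paper states this as a corollary of Theorem \ref{mtqd} combined with \cite[Theorem 2.2]{or}, and the only substantive step is the passage from quasidiagonality of $A_\Phi$ to that of $A$ via Theorem \ref{mtqd}, after which Orfanos's theorem applies verbatim. No discrepancy to report.
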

Let $A$ be a $C^*$-algebra, $\Phi$ be an involutory $*$-antiautomorphism of $A$ and $\tau:A_\Phi\longrightarrow\mathbb{R}$ be a real-linear positive functional such that $\tau(ab)=\tau(ba)$ for every $a,b\in A_\Phi$. Then $\tau$ extends uniquely to a complex-linear positive map $\tau^C:A\longrightarrow\mathbb{C}$ such that $\tau(ab)=\tau(ba)$ for every $a,b\in A$. The complex-linear positive map $\tau^C$ is called tracial functional. If $A$ is unital, then $\tau^C$ is called quasidiagonal if for every $\mathcal{F}$ of $A$ and $\varepsilon>0$, there exists a matrix algebra $M_k(\mathbb{C})$ and a unital c.p. linear map $\varphi:A\longrightarrow M_k(\mathbb{C})$ such that
\begin{equation}\label{eq1qd}
   \|\varphi(ab)-\varphi(a)\varphi(b)\|<\varepsilon,\hspace{1cm} a,b\in \mathcal{F},
\end{equation}
and
\begin{equation}\label{eq2qd}
    |\tau_{M_k}\circ\varphi(a)-\tau^C(a)|<\varepsilon,\hspace{2.5cm} a\in \mathcal{F},
\end{equation}
where $\tau_{M_k}$ is the unique normalized trace on $M_k(\mathbb{C})$.

\begin{lemma}\label{lem1}
Let $\tau_{M_k}$ and $\tau_{M_{2k}}'$ be the unique normalized traces on $M_k(\mathbb{C})$ and $M_{2k}(\mathbb{R})$ for $k\in\mathbb{N}$, respectively. Then there are c.p. maps $\eta:\mathbb{C}\longrightarrow M_2(\mathbb{R})$ and $\upsilon:\mathbb{C}\longrightarrow\mathbb{R}$ such that $\|\eta\|\leq1$ and
\begin{equation}\label{eqtr1}
   \upsilon\circ\tau_{M_k}=\tau_{M_{2k}}'\circ\eta^{(k)}.
\end{equation}
\end{lemma}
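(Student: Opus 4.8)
The plan is to produce the two maps explicitly from the pair $(\sigma,\rho)$ already introduced in \eqref{eqt1}--\eqref{eqt2}, and then to verify \eqref{eqtr1} by a direct trace computation. Concretely, I would set $\eta:=\sigma$, the embedding $\sigma(a+ib)=\left(\begin{smallmatrix} a & b \\ -b & a\end{smallmatrix}\right)$, which is already known to be a c.p. map; since $\sigma$ is a $*$-homomorphism onto its image it is contractive, so $\|\eta\|\le 1$ as required. For the second map I would take $\upsilon:=\tau'_{M_2}\circ\sigma:\mathbb{C}\longrightarrow\mathbb{R}$. Being the composition of the c.p. map $\sigma$ with the normalized trace $\tau'_{M_2}$ (a positive, hence completely positive, functional), $\upsilon$ is automatically c.p., which disposes of the positivity requirement for free. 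Unwinding the definition gives $\upsilon(a+ib)=\tfrac12(a+a)=a=\mathrm{Re}(a+ib)$, so $\upsilon$ is just the real-part functional; note also that this choice is forced, since the $k=1$ instance of \eqref{eqtr1} (where $\tau_{M_1}=\mathrm{id}_{\mathbb{C}}$) reads exactly $\upsilon=\tau'_{M_2}\circ\eta$.

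With these choices the verification of \eqref{eqtr1} reduces to evaluating both sides on an arbitrary $C=[a_{jl}+ib_{jl}]\in M_k(\mathbb{C})$. On the left, $\upsilon(\tau_{M_k}(C))=\mathrm{Re}\big(\tfrac1k\sum_{j}(a_{jj}+ib_{jj})\big)=\tfrac1k\sum_{j}a_{jj}$. On the right, $\eta^{(k)}(C)=\sigma^{(k)}(C)$ is the $2k\times 2k$ real matrix whose $(j,l)$ block is $\sigma(a_{jl}+ib_{jl})$; its diagonal blocks are $\sigma(a_{jj}+ib_{jj})=\left(\begin{smallmatrix} a_{jj} & b_{jj} \\ -b_{jj} & a_{jj}\end{smallmatrix}\right)$, each contributing $2a_{jj}$ to the ordinary trace, while the off-diagonal blocks never meet the main diagonal. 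Hence $\tau'_{M_{2k}}(\sigma^{(k)}(C))=\tfrac{1}{2k}\sum_{j}2a_{jj}=\tfrac1k\sum_j a_{jj}$, matching the left-hand side and establishing \eqref{eqtr1}.

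The only genuinely delicate point will be the bookkeeping in the trace computation: the doubling of the real part along each diagonal block, namely $\mathrm{tr}\,\sigma(c_{jj})=2\,\mathrm{Re}\,c_{jj}$, must cancel exactly against the mismatch between the two normalizations $\tfrac{1}{2k}$ and $\tfrac1k$. This cancellation is what pins down $M_{2k}$ (rather than any other size) as the correct codomain and forces $\upsilon$ to be the un-normalized real part rather than a scalar multiple of it; everything else is formal. I expect no obstacle from the positivity side, precisely because writing $\upsilon=\tau'_{M_2}\circ\eta$ exhibits it as a composite of maps already known to be completely positive.
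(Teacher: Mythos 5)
Your proof is correct, but it takes a genuinely different route from the paper's: where you set $\eta:=\sigma$ (the block embedding of \eqref{eqt1}) and $\upsilon:=\tau'_{M_2}\circ\sigma=\mathrm{Re}$, the paper instead defines $\eta(a+ib)=\left(\begin{smallmatrix} a & 0\\ 0 & b\end{smallmatrix}\right)$ and $\upsilon(a+ib)=a+b$, and then asserts \eqref{eqtr1} with the single word ``clearly.'' Your version is the stronger one on two counts. First, your identity actually checks out: both sides equal $\tfrac1k\sum_j a_{jj}$, and your observation that the $k=1$ case forces $\upsilon=\tau'_{M_2}\circ\eta$ is a good sanity check. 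With the paper's maps the two sides of \eqref{eqtr1} are $\tfrac1k\sum_j(a_{jj}+b_{jj})$ and $\tfrac{1}{2k}\sum_j(a_{jj}+b_{jj})$, which differ by a factor of $2$, so the paper's choice does not literally satisfy the displayed identity (one would need $\upsilon(a+ib)=\tfrac12(a+b)$). Second, your complete-positivity argument is airtight because everything is a composite of a real $*$-homomorphism and a trace, whereas the paper's $\eta$, applied entrywise to the positive matrix $\left(\begin{smallmatrix}1 & i\\ -i & 1\end{smallmatrix}\right)$, produces a non-symmetric (hence non-positive) $4\times 4$ real matrix, so its complete positivity is at best unclear. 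The one caution is downstream rather than in the lemma itself: Proposition \ref{p4} uses the specific shape of $\eta$ through the maps $\eta_1$, $\upsilon_1$ and the inequality \eqref{eq1t2}, so if one adopts your $\eta=\sigma$ and $\upsilon=\mathrm{Re}$ there, that comparison with $\theta^{(k)}$ (and the claim that $\upsilon_1\circ\tau$ is a trace on $A_\Phi$) must be re-verified for the new maps; as a proof of the lemma as stated, however, your argument is complete.
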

\begin{proof}
Let $\eta:\mathbb{C}\longrightarrow M_2(\mathbb{R})$ defined by
\begin{equation}\label{eqtr2}
    \eta(a+ib)=\left(
                 \begin{array}{cc}
                  a  & 0 \\
                  0  & b \\
                 \end{array}
               \right),
\end{equation}
and $\upsilon:\mathbb{C}\longrightarrow\mathbb{R}$ by $\upsilon(a+ib)=a+b$. These real-linear maps are c.p maps. Clearly, \eqref{eqtr1} holds.
\end{proof}
\begin{proposition}\label{p4}
Let $A$ be a  $C^*$-algebra, $\Phi$ be an involutory $*$-antiautomorphism of $A$ and $\tau$ be a tracial functional on $A$. If $\tau$ is quasidiagonal, then there is a  tracial functional $\tau_{A_\Phi}$ on $A_\Phi$ such that is quasidiagonal. Moreover, if $A$ is separable, unital, exact and $\tau_{A_\Phi}$ is faithful, then $A$ is quasidiagonal.
\end{proposition}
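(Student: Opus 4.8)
The plan is to handle the two assertions in turn. For the first, the natural candidate is the real-linear functional $\tau_{A_\Phi}:=\tau^C|_{A_\Phi}$ underlying $\tau=\tau^C$; by the definition preceding the statement it is automatically positive, real-linear and satisfies $\tau_{A_\Phi}(ab)=\tau_{A_\Phi}(ba)$, so the only thing to prove is that it is quasidiagonal. To this end I would fix a finite set $\mathcal{F}\subseteq A_\Phi$ and $\varepsilon>0$, regard $\mathcal{F}$ inside $A$, and apply the quasidiagonality of $\tau$ to obtain $k\in\mathbb{N}$ and a unital c.p. map $\varphi:A\longrightarrow M_k(\mathbb{C})$ with $\|\varphi(ab)-\varphi(a)\varphi(b)\|<\varepsilon$ and $|\tau_{M_k}\circ\varphi(a)-\tau^C(a)|<\varepsilon$ for $a,b\in\mathcal{F}$.

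Next I would transport this to $A_\Phi$ by setting $\psi:=\sigma^{(k)}\circ(\varphi|_{A_\Phi}):A_\Phi\longrightarrow M_{2k}(\mathbb{R})$, where $\sigma$ is the map of \eqref{eqt1}. Since $\sigma$, and hence $\sigma^{(k)}$, is a unital contractive $*$-homomorphism, $\psi$ is a unital c.p. real-linear map and the approximate multiplicativity of $\varphi$ passes to $\psi$ without loss. For the trace estimate one computes directly that $\tau_{M_{2k}}'\circ\sigma^{(k)}=\mathrm{Re}\circ\tau_{M_k}$ (an identity in the spirit of Lemma \ref{lem1}); since $\tau^C(a)=\tau_{A_\Phi}(a)\in\mathbb{R}$ for $a\in A_\Phi$, it follows that $|\tau_{M_{2k}}'\circ\psi(a)-\tau_{A_\Phi}(a)|\leq|\tau_{M_k}\circ\varphi(a)-\tau^C(a)|<\varepsilon$. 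Thus $\tau_{A_\Phi}$ is a quasidiagonal tracial functional on $A_\Phi$. I would stress that the homomorphism $\sigma$ is essential here: the map $\eta$ of Lemma \ref{lem1}, although adapted to the trace, is \emph{not} multiplicative and therefore cannot carry the approximate-multiplicativity estimate.

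For the second assertion I would first upgrade faithfulness from $A_\Phi$ to $A$. Writing $c=a+ib$ with $a,b\in A_\Phi$, one has $c^*c=a^*a+b^*b+i(a^*b-b^*a)$, and a short check using $\Phi(x)=x^*$ on $A_\Phi$ shows $a^*b-b^*a\in A_\Phi$; hence $\tau^C(a^*b-b^*a)$ is simultaneously real (being $\tau_{A_\Phi}$ of an element of $A_\Phi$) and purely imaginary (the element is skew-adjoint), so it vanishes and $\tau^C(c^*c)=\tau_{A_\Phi}(a^*a)+\tau_{A_\Phi}(b^*b)$. Faithfulness of $\tau_{A_\Phi}$ then forces $c=0$ whenever $\tau^C(c^*c)=0$, so $\tau^C$ is a faithful quasidiagonal trace on the separable, unital, exact $C^*$-algebra $A$.

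It remains to invoke the standard fact that a separable, unital, exact $C^*$-algebra carrying a faithful quasidiagonal trace is quasidiagonal (see \cite{bo}): using separability the approximating maps $\varphi_n:A\longrightarrow M_{k_n}(\mathbb{C})$ supplied by the trace can be arranged into a single asymptotically multiplicative sequence assembling into a $*$-homomorphism $A\hookrightarrow\prod_n M_{k_n}(\mathbb{C})/\bigoplus_n M_{k_n}(\mathbb{C})$; this homomorphism is injective precisely because $\tau^C$ is faithful, hence isometric, so the $\varphi_n$ are asymptotically isometric and $A$ is quasidiagonal. The genuine substance of the proposition lies in this last step, while the one nontrivial verification we must supply by hand is the transfer of faithfulness from $\tau_{A_\Phi}$ to $\tau^C$ through the decomposition $A=A_\Phi+iA_\Phi$; the translation in the first assertion is essentially bookkeeping built on the homomorphism $\sigma$.
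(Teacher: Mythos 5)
Your proof is correct, but it takes a genuinely different route from the paper's at both stages. For the first assertion the paper does not take $\tau_{A_\Phi}=\tau^C|_{A_\Phi}$ directly: it sets $\tau_{A_\Phi}=\upsilon_1\circ\tau$ with $\upsilon_1(a+ib)=a+|b|$ (a modification of the map $\upsilon$ of Lemma \ref{lem1}) and transports $\varphi$ by the map $\varphi'=\theta^{(k)}\circ\varphi\circ\Phi\circ *$ from the proof of Theorem \ref{mtqd}, then compares $\tau_{M_{2k}}'\circ\theta^{(k)}$ with $\tau_{M_{2k}}'\circ\eta_1^{(k)}$. Of course, for a tracial functional in the paper's sense $\tau$ is real on $A_\Phi$, so $\upsilon_1\circ\tau$ coincides with your restriction; but your choice of transport map is cleaner and arguably safer: $\sigma^{(k)}$ is an honest unital real $*$-homomorphism satisfying $\tau_{M_{2k}}'\circ\sigma^{(k)}=\mathrm{Re}\circ\tau_{M_k}$, whereas the paper's $\theta^{(k)}$ is rescaled by a factor depending on its argument (so it is not unital, and linearity is an issue), and the correct observation that $\eta$ cannot be used for the multiplicativity estimate is yours, not the paper's. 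For the ``moreover'' part the divergence is sharper: the paper applies \cite[Proposition 3.4]{ga17} to the \emph{real} algebra $A_\Phi$ (getting $A_\Phi$ quasidiagonal from faithfulness of $\tau_{A_\Phi}$) and then invokes Theorem \ref{mtqd} to pass to $A$; you instead upgrade faithfulness from $\tau_{A_\Phi}$ to $\tau^C$ via the identity $\tau^C(c^*c)=\tau_{A_\Phi}(a^*a)+\tau_{A_\Phi}(b^*b)$ (your verification that $a^*b-b^*a$ is a skew-adjoint element of $A_\Phi$ is the one genuinely new computation) and apply the Gabe-type result directly to the complex algebra $A$. Your version buys two things: it avoids routing a theorem stated for complex $C^*$-algebras through a real $C^*$-algebra, and it makes Theorem \ref{mtqd} unnecessary for this step; the paper's version keeps the whole argument on the real side and reuses its earlier machinery. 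Both are legitimate, and your faithfulness transfer is a worthwhile addition that the paper omits entirely.
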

\begin{proof}
Let $\tau:A\longrightarrow\mathbb{C}$ be a quasisdiagonal trace, $\mathcal{F}\subset A_\Phi$ be a finite subset, \eqref{eq1qd} and \eqref{eq2qd} hold for every $\varepsilon>0$ and $a,b\in\mathcal{F}$. Define the mappings $\sigma$, $\theta$  and $\varphi'$ as in the proof of the Theorem \ref{mtqd}, then we have
$$ \|\varphi'(ab)-\varphi'(a)\varphi'(b)\|<\varepsilon.$$

Suppose that $\eta$ and $\upsilon$ are the obtained c.p. maps in the Lemma \ref{lem1}. Define $\eta_1(a+ib)=\eta(a+i|b|)$ and $\upsilon_1(a+ib)=\upsilon(a+|b|)$. Note that by these definitions \eqref{eqtr1} holds for $\eta_1$ and $\upsilon_1$. Clearly, $\upsilon_1\circ\tau=\tau_{A_{\Phi}}$ is a trace on $A_\Phi$. Also,
\begin{equation}\label{eq1t2}
\tau_{M_{2k}}'\circ\theta^{(k)}(\mathfrak{a}) \leq \tau_{M_{2k}}'\circ\eta_1^{(k)} (\mathfrak{a}),
\end{equation}
for every $\mathfrak{a} \in M_{k}(\mathbb{C})$. Then
\begin{eqnarray*}
  |\tau_{M_{2k}}\circ\varphi'(a)-\tau_{A_{\Phi}}(a)| &=&|\tau_{M_{2k}}\circ \theta^{(k)}\circ\varphi\circ\Phi\circ*(a)-\tau_{A_{\Phi}}(a)| =|\tau_{M_{2k}}\circ \theta^{(k)}\circ\varphi(a)-\tau_{A_{\Phi}}(a)|  \\
   &\leq& |\tau_{M_{2k}}'\circ\eta_1^{(k)}\circ\varphi(a)-\upsilon_1\circ\tau(a)| \\
   &=& | \upsilon_1\circ\tau_{M_k}\circ\varphi(a)-\upsilon_1\circ\tau(a)|\\
   &<& \|\upsilon_1\|\varepsilon,
\end{eqnarray*}
for every $a\in \mathcal{F}$. Thus $\tau_{A_\Phi}$ is quasisdiagonal. If $\tau_{A_\Phi}$ is faithful, then $A_\Phi$ is quasisdiagonal \cite[Proposition 3.4]{ga17}. Then Theorem \ref{mtqd} implies that $A$ is quasisdiagonal.
\end{proof}

\end{document}